\documentclass[reqno,11pt]{amsart}
\usepackage{color}

 \usepackage{mathtools}

\newtheorem{theorem}{Theorem}[section]

\newtheorem{corollary}[theorem]{Corollary}

\theoremstyle{definition}

\theoremstyle{remark}
\newtheorem{remark}[theorem]{Remark}
\newtheorem{example}[theorem]{Example}

 \makeatletter 
 \def\dashint{\operatorname{\,\,\,\mathclap{\int} \kern-.23em\text{\bf--}\!\!}}
 \makeatother

\def\dashnorm{\,\,\text{\bf--}\kern-.5em\|}
\def\ninf{\qopname\relax\@empty{inf\phantom{p}\!\!\!}}

 \makeatother

\newcommand\sfL{{\sf{L}}}

\newcommand\bB{\mathbb{B}}
\newcommand\bC{\mathbb{C}}

\newcommand\bM{\mathbb{M}}
\newcommand\bR{\mathbb{R}}

\newcommand\scB{\normalfont\textsc{b}}

\newcommand{\loc}{{\rm loc}\,}

 \newcommand{\mysection}[1]{\section{#1}
 \setcounter{equation}{0}}

\begin{document}

\title[On the heat equation with singular drift]
{On the heat equation with singular drift}
\author{N.V. Krylov}

\email{nkrylov@umn.edu}
\address{School of Mathematics, University of Minnesota, Minneapolis, MN, 55455}
 
\keywords{Heat equation, singular first-order coefficients,
Morrey class drift}
 
\subjclass{35B45, 35B30}

\begin{abstract} 
We prove the maximum modulus estimates in terms of the $L_{q,p}$-norm
of the free term for solutions of the heat equation with Morrey drift for any $q,p$ satisfying $d/p+2/q<2$
and any order of integration in the definition
of the $L_{q,p}$-norm. An application to the case of $b$ satisfying the Ladyzhenskaya-Prodi-Serrin condition is given. The technique is
easily adaptable to equations with
Laplacians of order $\geq 1$.
 
\end{abstract}

\maketitle

\mysection{Introduction}

Let $\bR^{d}$, $d\geq2$, be a Euclidean space
of points $x=(x^{1},...,x^{d})$, $\bR^{d+1}
=\{(t,x):t\in\bR,x\in\bR^{d}\}$. Define
$$
D_{i}=\frac{\partial}{\partial x^{i}},\quad
Du=(D_{i}u).
$$

Fix $T\in(0,\infty)$ and set
$$
\bR^{d}_{T}=[0,T)\times \bR^{d}.
$$

We will be dealing with the maximum modulus estimates of solutions of
\begin{equation}
                      \label{3.27.1}
\partial_{t}u+\Delta u+b^{i}D_{i}u=-f
\quad \text{in} \quad \bR^{d}_{T},\quad
u(T,\cdot)=0.
\end{equation}
 All functions involved in this paper, such as $f,g,b,\scB$,
are assumed to be Borel measurable and {\em bounded\/}.
Therefore, \eqref{3.27.1} has a unique bounded
solution which belongs to $W^{1,2}_{p,\loc}
(\bR^{d}_{T})$ for any $p\in(1,\infty)$.
Our main interest is in the estimates like
\begin{equation}
                      \label{3.27.200} 
|u(0,0)|\leq N\|f\|_{L_{q,p}(\bR^{d}_{T})},
\end{equation}
where for $q,p\in(1,\infty)$
\begin{equation}
                      \label{4.3.1} 
\|f\|^{q}_{L_{q,p}(\bR^{d}_{T})}=
\int_{0}^{T}\Big(\int_{\bR^{d}}|f(t,x)|^{p}\,dx
\Big)^{q/p}\,dt.
\end{equation}
If $q$ or $p$ are infinite, one understands
this norm in a common way.

 The following is a kind of side-result almost explicitly stated in \cite{RZ_23} primarily devoted
to proving the existence and weak uniqueness of solutions of \eqref{3.28.1}.
\begin{theorem}
                        \label{theorem 4.1.1}
Let $d\geq 3$ and 
$$
\bar b:=\|b\|_{L_{q_{0},p_{0}}(\bR^{d}_{T})}<\infty
$$
 for some $p_{0},q_{0} $
satisfying (Ladyzhenskaya-Prodi-Serrin condition)
\begin{equation}
                            \label{4.3.5}
p_{0}\in(d,\infty],\quad\frac{d}{p_{0}}+\frac{2}{q_{0}}= 1.
\end{equation}
Then for each 
$p ,q  $
satisfying
\begin{equation}
                            \label{3.31.2}
p,q\in(1,\infty),\quad \frac{d}{p}+\frac{2}{q}<2
\end{equation}
estimate \eqref{3.27.200} holds
with $N=N'T^{1-d/(2p)-1/q}$ and $N'$ depending only on $d$, $\bar b,q,p$ if $p_{0}=
\infty$ and otherwise only on $d$, $\bar b,p_{0},q,p$ and the function
$$
b(t):=\Big(\int_{\bR^{d}}
|b (t,x)|^{p_{0}}\,dx\Big)^{1/(p_{0} -d )}.
$$ 
\end{theorem}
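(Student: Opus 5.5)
Since $f$ and $b$ are bounded, the bounded solution has the Feynman--Kac representation
$$
u(0,0)=E\int_0^T f(t,x_t)\,dt ,
$$
where $x_t$ solves $dx_t=b(t,x_t)\,dt+\sqrt2\,dw_t$, $x_0=0$. So the estimate is an occupation-time (Krylov-type) bound for the drifted process. The plan is to get it by partitioning $[0,T]$ into short intervals on which the drift is a small perturbation, and to prove the bound on each piece by a Duhamel/perturbation argument around the heat kernel.

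\emph{Scaling.} Set $u_\lambda(t,x)=u(\lambda^2t,\lambda x)$. Under this change the $L_{q_0,p_0}$-norm of $b$ is invariant, because $d/p_0+2/q_0=1$, and $\|f\|_{L_{q,p}}$ produces exactly the factor $T^{1-d/(2p)-1/q}$. Hence we may take $T=1$.

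\emph{Partition.} If $p_0<\infty$, then $q_0<\infty$ and $\int_0^1 b(t)^{\,\cdot}\,dt<\infty$. Indeed, $b(t)^{p_0-d}=\|b(t,\cdot)\|_{L_{p_0}}^{p_0}$ and $q_0=2p_0/(p_0-d)$, so the absolute continuity of $\int\|b(t)\|_{p_0}^{q_0}dt$ is controlled by the function $b(t)$. We split $[0,1]$ into $n$ intervals $[t_k,t_{k+1}]$ with $\int_{t_k}^{t_{k+1}}\|b(t)\|_{p_0}^{q_0}dt\le\varepsilon$. The number $n$ depends on $\varepsilon$ and on the modulus of integrability of $b(t)$; this is the dependence recorded in the statement. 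If $p_0=\infty$, then $q_0=2$, and $\varepsilon$-smallness on intervals cannot be arranged uniformly. Instead we use the scaling invariance: on an interval of length $h$, $\|b\|_{L_{2,\infty}}$ stays $\le\bar b$, and we use Girsanov/Khasminskii-type exponential moments $E\exp(\int |b|^2dt)$. By Novikov these are controlled by $\bar b^2$ alone, so no partition depending on $b$ is needed. The constant then depends only on $\bar b$.

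\emph{Local estimate.} On an interval $I=[s,s+h]$ we write the solution of the equation with zero terminal data at $s+h$ as
$$
v=G f+G(b\cdot Dv),
$$
where $G$ is the backward heat potential. We estimate $\sup|v|$ and $\sup_t\|Dv(t)\|$ in a mixed norm, choosing the exponents by H\"older so that $\|b\cdot Dv\|$ lands in a space $L_{\tilde q,\tilde p}$ with $d/\tilde p+2/\tilde q<2$ and is dual to $b$. The heat-kernel bounds $\|DG g\|_{L_{r,s}}\le N\|g\|_{L_{\tilde q,\tilde p}}$ hold in the scaling-critical case thanks to the exact relation $d/p_0+2/q_0=1$, with an extra smoothing margin coming from the strict inequality in \eqref{3.31.2}. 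This gives $\|b\,Dv\|\le N\varepsilon^{1/q_0}\|Dv\|$, and for small $\varepsilon$ the perturbation series converges. The resulting bound is $\sup|v|+\|Dv\|\le N\|f\|_{L_{q,p}(I)}$.

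Using that $G$ acts on mixed norms with integration in time on the outside, it will be convenient to carry out these estimates in norms with $t$ outermost.

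\emph{Gluing.} Going backward in time, $u$ on $[t_k,t_{k+1}]$ equals the solution with free term $f$ plus the solution of the homogeneous equation with terminal data $u(t_{k+1},\cdot)$. The second piece is bounded by $\sup|u(t_{k+1})|$ by the maximum principle. Iterating over the $n$ intervals gives $|u(0,0)|\le nN\|f\|$.

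The main obstacle is the local step. Closing the fixed point requires an estimate for $Dv$ in a mixed norm that pairs with $L_{q_0,p_0}$, and it must hold uniformly in the critical scaling, which is delicate near the endpoint $p_0=d$ and for $q_0$ close to $\infty$. I would first try the choice $\|Dv\|_{L_{q',p'}}$ with $1/q'=1/\tilde q-1/q_0$, $1/p'=1/\tilde p-1/p_0$. I would verify the required parabolic Sobolev embedding for $DG$ directly from the kernel bound $|DG(t,x)|\le Nt^{-(d+1)/2}e^{-|x|^2/(ct)}$ combined with Young's and the Hardy--Littlewood--Sobolev inequality in $t$.
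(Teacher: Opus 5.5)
Your argument is correct in outline, but it takes a genuinely different route from the paper.

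The paper never slices time. For $p_0<\infty$ it truncates $b$ at the time-dependent level $\lambda(t)=\hat N b(t)$. Then $b'=bI_{|b|\ge\lambda(t)}$ has small $L_d$-Morrey norm uniformly in $t$, and $\scB=b-b'$ satisfies $\int_0^T\sup_x|\scB|^2\,dt=\hat N^2\bar b^{q_0}$. The part $b'$ is handled by a fixed point for $Du$ in $L_{2q,2p}$, based on the Morrey-potential bound of Theorem~\ref{theorem 3.27,1} (proved with sharp and maximal functions in mixed norms). The part $\scB$ is then removed by Girsanov's theorem together with the Markov-property bound on the second moment of $\int f(s,x_s)\,ds$ (Theorem~\ref{theorem 3.27,3}), at the price of the factor $\sqrt2e^{[\scB]^2/2}$.

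You instead get smallness from the finiteness of $\int_0^1\|b(t,\cdot)\|^{q_0}_{L_{p_0}}dt$. You run the Duhamel iteration for $Dv$ in critical Lebesgue mixed norms, using only Young's inequality in $x$ (with $\|Dp(s,\cdot)\|_{L_{p_0'}}=Ns^{-1+1/q_0}$) and the one-dimensional Hardy--Littlewood--Sobolev inequality of order $1/q_0$ in $t$. You then glue the slices with the maximum principle.

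What each route buys:
\begin{itemize}
\item Your route is more elementary, needs no probability when $p_0<\infty$, and gives a constant linear in the number of slices $n\approx\bar b^{q_0}/\varepsilon$ rather than exponential in $\bar b^{q_0}$.
\item The paper's route proves the far more general Theorem~\ref{theorem 3.31.1}, which covers drifts that are only small in a spatial Morrey norm (e.g.\ Example~\ref{example 9.25.1}). There your H\"older step $\|b\,Dv\|\le\|b\|_{L_{q_0,p_0}}\|Dv\|$ has no analogue.
\end{itemize}

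Three corrections and one check you still owe:
\begin{itemize}
\item The map $t\mapsto\int_0^t\|b(s,\cdot)\|^{q_0}_{L_{p_0}}ds$ is continuous with total mass $\bar b^{q_0}$. So you can take $n=\lceil\bar b^{q_0}/\varepsilon\rceil$; $n$ depends only on $\bar b$ and $\varepsilon$, not on a modulus of integrability of $b(t)$.
\item The same slicing works for $p_0=\infty$: there $q_0=2$, $\|Dp(s,\cdot)\|_{L_1}=Ns^{-1/2}$, and HLS of order $1/2$ applies for $1<\tilde q<2$. Your claim that slicing fails in that case is therefore wrong. Your Girsanov alternative is still fine, and it is exactly the paper's treatment with $b'=0$, $\scB=b$.
\item You still have to check that the exponents are compatible with the given $(q,p)$. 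Choose $p'\ge p$ with $d/p'$ small, then $q'$ with $d/p'+2/q'\in(d/p+2/q-1,1)$.
\begin{itemize}
\item With this choice $DG$ maps $L_{q,p}(I)$ into $L_{q',p'}(I)$ uniformly for $|I|\le1$.
\item Define $\tilde p,\tilde q$ by $1/\tilde p=1/p'+1/p_0$ and $1/\tilde q=1/q'+1/q_0$. They satisfy $\tilde q>1$ and $d/\tilde p+2/\tilde q=1+d/p'+2/q'<2$, which is what the sup bound for $G(b\,Dv)$ requires.
\end{itemize}
\end{itemize}
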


\begin{remark}
                       \label{remark 4.4.6}
The fact that \eqref{3.27.200} holds is almost
trivial (cf. Remark \ref{remark 4.3.3})
because our data are assumed to be bounded. What is highly
nontrivial is what the constant $N$ depends upon.
\end{remark}
\begin{remark}
                       \label{remark 4.3.3}
If $b\equiv0$ the result of Theorem
\ref{theorem 4.1.1} is achieved by trivial
computations.  It seems that the fact of existence of estimate \eqref{3.27.1}
for {\em all\/} $q,p$ satisfying \eqref{3.31.2},
given that $b\in L_{q_{0},p_{0}}$ with $q_{0},p_{0}$ satisfying \eqref{4.3.5}  was never addressed
before \cite{RZ_23}. As far as the author
is aware,  it was
not know that \eqref{3.27.200} is true even for
$q=q_{0},p=p_{0}$  if \eqref{4.3.5} holds. However, if it holds with $<$ in place of $=$,
it is relatively easy to prove that the assertion
of Theorem \ref{theorem 4.1.1} holds true.
This is done in Lemma 3.3 of \cite{KR_05}
by using the probability theory and Girsanov's transformation. It is worth pointing out that
the estimates like \eqref{3.27.200} are very widely discussed in probabilistic literature
related to investigating the weak and strong
solvability of \eqref{3.28.1}. Apart from \cite{RZ_23} we are attracting the reader's attention to
\cite{XXZZ_20} and numerous references in these papers, in particular, to \cite{BFGM_19}.
\end{remark}

The goal of this article is to generalize
Theorem \ref{theorem 4.1.1} by proving the following, (recall
that $d\geq2$) in which $\bB_{r}$ is the set
of balls $B$ in $\bR^{d}$ of radius $r$,
$|B|$ is the volume of $B$, and
$$
\dashnorm f\|^{p}_{L_{p}(B)}=\frac{1}{|B|}
\int_{B}|f|^{p}\,dx.
$$

 \begin{theorem}
                        \label{theorem 3.31.1}
Let $q,p$ satisfy \eqref{3.31.2} and suppose
 that for any   constant $\tilde b\in(0,1]$ 
there exists $p_{0}=p_{0}(q,p,\tilde b)\in (1,d]$, such that
$p_{0}':=p_{0}/(p_{0}-1)<(2q)\wedge(2p)$, and
the function
$b$ admits the representation 
$$
b=b'+\scB,\quad (b',\scB)=(b',\scB)_{b,q,p,\tilde b}
$$
 such that
$$
\sup_{t\in\bR}\sup_{r>0}r
\sup_{B\in \bB_{r}} 
\dashnorm b'(t,\cdot) \|_{L_{p_{0}}(B)}\leq\tilde b,\quad [\scB]^{2}:=\int_{0}^{T}
\sup_{\bR^{d}}|\scB(t,x)|^{2}\,dt<\infty.
$$
Then  
the solution $u$ of \eqref{3.27.1}
admits the estimate  \begin{equation}
                      \label{3.27.2}
\sup_{\bR^{d}_{T} }|u |\leq N_{0}T^{1-d/(2p)-1/q}
 \|f\|_{L_{q,p}(\bR^{d}_{T})} ,
\end{equation}
 where $N_{0}$
depends only on $d,p,q$ and the functions
$p_{0}(q,p,\tilde b), [\scB_{b,q,p,\tilde b}]$.
\end{theorem}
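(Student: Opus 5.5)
The plan is in three stages. First I get a small-time estimate with the Morrey part $b'$ treated perturbatively. Then I absorb $\scB$ through a time partition. Finally I scale to obtain the factor $T^{1-d/(2p)-1/q}$.

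\emph{Scaling.} By the parabolic scaling $u(t,x)\to u(T t,\sqrt T x)$ the Morrey quantity $\sup_r r\,\dashnorm b'\|_{L_{p_0}(B)}$ is invariant. Also $[\scB]$ is invariant: $b$ becomes $\sqrt T b$, so $\int_0^1 \sup|\sqrt T\scB(Ts,\cdot)|^2ds=\int_0^T\sup|\scB|^2dt$. Finally $\|f\|_{L_{q,p}}$ acquires exactly the factor $T^{1-d/(2p)-1/q}$. Hence it suffices to take $T=1$.

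\emph{Step 1 ($\scB=0$, small $\tilde b$).} Write $u=\mathcal{G}f+\mathcal{G}(b'\cdot Du)$, where $\mathcal{G}$ is the heat potential. I will work in a norm $\|\cdot\|_{\mathcal X}$ for $Du$ of Morrey type adapted to $p_0'$: control $\sup_{t,B\in\bB_r} r^{?}\dashnorm Du(t,\cdot)\|_{L_{p_0'}(B)}$ weighted in time. Then $|b'Du|$ is controlled by Hölder on balls,
\[
\dashnorm b'Du\|_{L_1(B)}\le \dashnorm b'\|_{L_{p_0}(B)}\dashnorm Du\|_{L_{p_0'}(B)}\le \tilde b\, r^{-1}\dashnorm Du\|_{L_{p_0'}(B)} .
\]
This is a measure in the Morrey class of order $1$ times the $Du$-norm. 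The gradient of the heat potential applied to such a density gains one derivative in the scale, which brings us back into the same space of $Du$. The restriction $p_0'<2q\wedge 2p$ should be exactly what lets $D\mathcal{G}f$ itself lie in this space, with norm $\le N\|f\|_{L_{q,p}}$. Indeed $d/p+2/q<2$ gives $D\mathcal G f$ some Hölder/Morrey integrability, and we need $L_{p_0'}$ averages finite. For $\tilde b$ small, $N\tilde b<1/2$ and the Neumann series converges. This yields $\|Du\|_{\mathcal X}\le N\|f\|$, and then
\[
\sup|u|\le \sup|\mathcal G f|+\sup|\mathcal G(b'Du)|\le N\|f\|_{L_{q,p}} .
\]
The last bound holds because the Morrey-class (order $2$) density $b'Du$ has a bounded heat potential.

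\emph{Step 2 (adding $\scB$).} Choose $\tilde b$ as in Step 1, which fixes $p_0$ and the decomposition. Partition $[0,1]$ into intervals $[t_{k},t_{k+1})$ with $\int_{t_k}^{t_{k+1}}\sup|\scB|^2dt\le\varepsilon$. Their number is $\le [\scB]^2/\varepsilon+1$. On each interval treat $\scB\cdot Du$ as an additional right-hand side. The $L_\infty$ estimate of $\mathcal G(|\scB||Du|)$ requires controlling $\|Du\|$ in an $L_{2,\infty}$-type sense, since $\int |\scB|\,|D\Gamma|\,|Du|$ with $\int|\scB|^2\le\varepsilon$ only needs $Du$ in a weighted $L_2$ in time. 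Here I would include in $\mathcal X$ a component like $\sup_{t}(\ldots)$, so that $\varepsilon$-smallness closes the fixed point on each subinterval. I would then iterate backward from $t=1$, using the terminal data $u(t_{k+1},\cdot)$ bounded by the previous step. Boundedness of terminal data contributes $\sup|u(t_{k+1})|$ via the maximum principle for the homogeneous part. The number of steps is controlled by $[\scB]$, giving $N_0$ depending on $d,p,q,p_0,[\scB]$.

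\emph{Main obstacle.} The crux is Step 1: finding the right function space in which both the free term $D\mathcal G f$ (with $f\in L_{q,p}$, arbitrary order of integration) and the operator $g\mapsto D\mathcal G(b'g)$ are bounded, with the latter having small norm $\le N\tilde b$. I would first try pointwise kernel bounds, $|D\Gamma(t,x)|\le N t^{-(d+1)/2}e^{-|x|^2/(ct)}$, combined with a dyadic decomposition in $x$ over balls of radius $\sqrt{t}$. This reduces everything to summing Morrey averages. The condition $p_0'<2(p\wedge q)$ would enter when estimating $L_{p_0'}$ averages of $D\mathcal Gf$. Handling $\scB$ together with the nonlocal term (Step 2) is secondary but needs the terminal-data version of Step 1.
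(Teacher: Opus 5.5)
\textbf{Verdict: genuine gap.} Your scaling reduction is correct, and the overall plan (treat $b'$ perturbatively around the heat potential, handle $\scB$ separately) matches the paper. But Step~1, which you yourself call the crux, is not a proof.

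\textbf{Step 1 is only asserted.} The space $\mathcal X$ has an unspecified exponent $r^{?}$. The two properties you need are stated, not shown: $D\mathcal Gf\in\mathcal X$ with norm $\le N\|f\|_{L_{q,p}}$ for all admissible $q,p$ (including $q$ near $1$), and $\|D\mathcal G(b'g)\|_{\mathcal X}\le N\tilde b\|g\|_{\mathcal X}$.

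\textbf{As sketched, it also breaks at two borderline points.}
\begin{itemize}
\item H\"older with exactly $L_{p_0'}$-averages of $Du$ puts $b'Du$ only in an $L_1$-based Morrey class. That is the endpoint where potential estimates are merely weak type. Compare the strict requirement $r,s>p_0'$ in Theorem~\ref{theorem 3.27,1}, which keeps the maximal-function step off $L_1$.
\item Your final claim is false as stated: a density of parabolic Morrey order exactly $2$ need not have a bounded heat potential. For $d\ge3$, $h=|x|^{-2}$ satisfies $r^{2}\dashint_{C_r}h\le N$, yet $\int_0^1\int s^{-d/2}e^{-|y|^2/(4s)}|y|^{-2}\,dyds=N\int_0^1 s^{-1}ds=\infty$. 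You need a strict gain below order $2$, coming from $d/p+2/q<2$, and your sketch never shows where it enters.
\end{itemize}

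\textbf{The missing idea is the paper's choice of space and its key lemma.} Put $Du$ in the mixed Lebesgue space $L_{2q,2p}$. It contains $DP_{2,4}f$ by the parabolic embedding, since $d/(2p)+1/q<1$.
\begin{itemize}
\item Theorem~\ref{theorem 3.27,1} gives $\|P_{1,8}(|b'|g)\|_{L_{2q,2p}}\le N\tilde b\|g\|_{L_{2q,2p}}$.
\item Its proof chains four steps: the Fefferman--Stein sharp function; domination of $P^{\sharp}_{1,8}$ by the fractional maximal function $\bM_{1}$; H\"older on cylinders, giving $\bM_{1}(|b'|g)\le\tilde b\,(\bM g^{p_0'})^{1/p_0'}$; and the Hardy--Littlewood maximal theorem in $L_{2q/p_0',2p/p_0'}$. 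The last step is exactly where $p_0'<2(q\wedge p)$ is used.
\item Since $|Du|\le NP_{1,8}(|b'||Du|)+|DP_{2,4}f|$, smallness of $\tilde b$ closes the estimate.
\item The sup bound then follows from $|P_{2,4}(b'Du)(0,0)|\le NP_{1,8}g(0,0)$, with $g=P_{1,8}(|b'||Du|)$, and H\"older against $p_{1,8}\in L_{(2q)',(2p)'}$. This norm is finite again because $d/(2p)+1/q<1$.
\end{itemize}

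\textbf{Your Step~2 is a genuinely different route.} The paper uses Girsanov (Theorem~\ref{theorem 3.27,3}). The Markov property turns the sup bound for $b'$ into $E_{0,0}(\int_0^Tf(s,x_s)ds)^2\le 2N_0^2\|f\|^2_{L_{q,p}}$. Cauchy--Schwarz against the exponential martingale of $\scB$ then costs the factor $\sqrt2 e^{[\scB]^2/2}$.

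Time slicing plus the maximum principle for the homogeneous part is sound in principle. It would even give a constant growing only linearly in $[\scB]^2$. However, a plain $\sup_t$ component in your norm would not close. The condition $\sup_x|\scB|\in L_2(dt)$ is exactly critical, and such a norm leads to $\int_t^T(s-t)^{-1/2}\sup_x|\scB(s,x)|\,ds$. This is unbounded for general $L_2$ functions, however small their norm.

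Again $L_{2q,2p}$ works. With $\beta(s)=\sup_x|\scB(s,x)|$, you have $\|P_{1,k}(|\scB||g|)(t,\cdot)\|_{L_{2p}}\le N\int_t^{\infty}(s-t)^{-1/2}\beta(s)\|g(s,\cdot)\|_{L_{2p}}\,ds$. Hardy--Littlewood--Sobolev in $t$, from $L_{2q/(q+1)}$ to $L_{2q}$ (valid since $q>1$), gives operator norm $\le N\|\beta\|_{L_2(t_k,t_{k+1})}\le N\varepsilon^{1/2}$. So Step~2 can be repaired, but only once Step~1 is done in such a space.
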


This theorem is proved in Section \ref{section 4.3.1}.
Let us show, following Remark 2.2 in \cite{Kr_25_2}
or Remark 2.2  in \cite{Kr_25_1}, how it implies Theorem \ref{theorem 4.1.1}.

{\bf Proof of Theorem \ref{theorem 4.1.1}
for $d\geq2$}.
Let $q,p$ satisfy \eqref{3.31.2}.
In case $p =\infty$  it suffices to take $b'=0,\scB=b$ in Theorem \ref{theorem 3.31.1}.
Since otherwise $p \in(d,\infty) $, we   take an
arbitrary constant $\hat N$, introduce
$
\lambda(t)=\hat Nb(t),
$ 
and then for  
$$
b'  (t, x):=b  (t, x)I_{|b  (t,x)|\geq \lambda(t)}
$$
and $B\in\bB_{\rho}$  we have
$$
\dashint_{B }|b' (t, x)|^{d}\,dx
\leq \lambda^{d-p }(t)
\dashint_{B }|b  (t, x)|^{p }\,dx
\leq N(d)\hat N^{d-p_{0}}\rho^{-d}.
$$
 
Here $N(d)\hat N^{d-p  }$ can be made
arbitrarily small if we choose $\hat N$
large enough. In addition, for 
$\scB: =b -b' $ we have
 $|\scB|\leq \lambda $ and
$$
\int_{0}^{T}\lambda^{2}(t)\,dt
=\hat N^{2}\int_{0}^{T}\Big(\int_{\bR^{d}}|b (t,x)|^{p  }
\,dx\Big)^{q  /p  }\,dt<\infty.
$$
It follows that the conclusion of Theorem
\ref{theorem 3.31.1} is applicable
as long as $2q,2p>d'$. Observe that
\eqref{3.31.2} implies that $2q,2p>2$
and $d\geq 2$, so that $d'\leq 2$.   \qed
\begin{remark}
                        \label{remark 4.1.1}
It follows from the above proof of Theorem 
\ref{theorem 4.1.1}  that its conclusions are also
valid if $b$ is represented as the finite sum of terms
each of which satisfies the assumption of  
Theorem \ref{theorem 4.1.1} with perhaps
$q_{0},p_{0}$ different for different terms.
\end{remark}

Here is an example showing the case in which
$|b(t,x)|\leq b(x)$, $b\not\in L_{d-1/2,\loc}(\bR^{d})$ and Theorem \ref{theorem 3.31.1} is applicable. In this case  
 the assumption of Theorem \ref{theorem 4.1.1}
is not satisfied by far.
The following is close to Example 2.3 in \cite{Kr_25_2} or
 Example 2.3   in \cite{Kr_25_1}. Below
 $$
 B_{r}=\{x\in\bR^{d}:|x|<r\},\quad
 B_{r}(x)
=x+B_{r}.
$$

\begin{example}
                        \label{example 9.25.1}

Let $d\geq3$, $p_{0} \in[d-1,d)$   and take $r_{n}>0$, $n=1,2,...$, such that
the  sum of $\rho_{n}:=r_{n}^{d-p_{0}}$ is $1/2$. 
Also take a sequence $\alpha_{n}\downarrow 0$ such that
$$
\sum_{n=1}^{\infty} r_{n} ^{d-p_{0}}\alpha_{n}^{p_{0}}<\infty,
\quad  \sum_{n=1}^{\infty}  r_{n} ^{d-p}\alpha_{n}^{p}=\infty
$$
for any $p>p_{0}$. For instance, for large $n$
set $r_{n}=(n\ln^{3}n)^{- 1/(d-p_{0})},\alpha_{n}=(\ln n)^{-1/p_{0}}$.

Next,
let $e_{1}$ be the first
basis vector  and set   $x_{0}=1$, $\rho_{n}=r_{n}^{d-p_{0}}$,
$$
x_{n}=1-  2\sum_{1}^{n}\rho_{i} ,\quad 
c_{n}=(1/2)(x_{n}+x_{n-1}),
$$
$$
  b_{n}(x)=( \alpha_{n}/r_{n}) I_{B_{r_{n}}(c_{n}e_{1})},\quad b=\sum _{1}^{\infty}b_{n}.
$$
Since $r_{n}\leq 1$ and $d-p_{0}\leq 1$,  the supports of $b_{n}$'s are disjoint and
for $p>0$
$$
\int_{B_{1}}b^{p}\,dx=\sum _{1}^{\infty} 
N(d)( \alpha_{n}/r_{n} )^{ p}r_{n}^{d},
$$
which is finite for $p=p_{0}$ and infinite for $p>p_{0}$.

Then observe that for any $n\geq 1$ and any ball $B$
of radius $\rho$
\begin{equation}
                                \label{4.1.1}
 \int_{B }  b_{n} ^{p_{0}}dx \leq N(d)\alpha_{n}^{p_{0}} \rho^{d-p_{0}} .
\end{equation}
Also note that the volume of the intersection of $B$ with $B_{r_{n}}(c_{n}e_{1})$ will increase   if we move $B$ perpendicularly
to $e_{1}$ so that its new center will be
on the $x^{1}$-axis. Therefore, while
estimating the integral of $b^{p}$ over $B$,
we may assume that the center of $B$ is
on the $x^{1}$-axis. 
Then for any integer $k\geq 2$, if the intersection of $B$ with $\Gamma_{k}:=\bigcup_{n\geq k} B_{r_{n}}(c_{n}e_{1})$
is nonempty, the intersection
 consists of some nonempty $B\cap B_{r_{i}}(c_{i}e_{1})\subset B$, $i=i_{0},...,i_{1}\geq k$, and, possibly, $B\cap B_{r_{i_{0}-1}}(c_{i_{0}-1}e_{1})$ and
$B\cap B_{r_{1}+1}(c_{r_{i}+1}e_{1})$.
According to \eqref{4.1.1}
$$
\int_{B}[b^{p_{0}}_{r_{i_{0}-1}}+
b^{p_{0}}_{r_{i_{1}+1}}]\,dx\leq N(d)\alpha^{p_{0}}_{k-1}
\rho^{d-p_{0}}.
$$
Regarding other $B_{r_{i}}(c_{i}e_{1})$ note that, obviously,
$$
   (2\rho)\wedge 1\geq c_{i_{0}}-c_{i_{1}}=2 \sum_{i=i_{0}+1}^{i_{1} }
\rho_{i}+\rho_{i_{0}}-\rho_{i_{1}}\geq \sum_{i=i_{0} }^{i_{1} }
\rho_{i}.
$$
Therefore,
$$
 \int_{B} ( bI_{\Gamma_{k}})  ^{p_{0}}\,dx\leq 
 N(d)\sum_{i=i_{0}}^{i_{1}}(\alpha_{i}/r_{i})^{p_{0}}r_{i}^{d} +
N(d)\alpha^{p_{0}}_{k-1}
\rho^{d-p_{0}}
$$
$$
\leq N(d)\alpha^{p_{0}}_{k-1}\big((2\rho)\wedge 1+\rho^{d-p_{0}}\big)) 
$$
where the last term is less than $N(d)\alpha^{p_{0}}_{k-1}\rho^{d-p_{0}}$. 

We see that $b=bI_{\Gamma_{k}}+bI_{\Gamma^{c}_{k}}$, where
$$
 \sup_{\rho>0}\rho
\sup_{B\in \bB_{\rho}} 
\dashnorm bI_{\Gamma_{k}} \|_{L_{p_{0}}(B)}
$$
can be made as small as we like and 
$bI_{\Gamma^{c}_{k}}$ is bounded.
Moreover, for $q,p$ satisfying \eqref{3.31.2},
we have $2p,2q>2$ and $p_{0}'\leq (d-1)/(d-2)
\leq2$.
Therefore, Theorem \ref{theorem 3.31.1}
is applicable in case $b(t,x)$ is such that
$|b(t,x)|\leq b(x)$.
\end{example}

\mysection{Auxiliary results}

Define 
$$
C_{r}(t,x)=[t,t+r^{2})\times B_{r}(x),
$$
and let $\bC_{r}$ be the collection of $C_{r}(t,x)$. If $C\in\bC_{r}$ by $|C|$ we mean
its Lebesgue measure and for appropriate $f$
and $p\geq1$ we set
$$
\dashint_{C}f\,dxdt=\frac{1}{|C|}
\int_{C}f\,dxdt,\quad \dashnorm f\|^{p}_{L_{p}(C)}=\dashint_{C}|f|^{p}\,dxdt.
$$

For $k,s,r,\alpha>0$, and appropriate $f(t,x)$'s
on $\bR^{d+1}$
\index{$S$@Miscelenea!$p_{\alpha,k}(s,r)$}%
\index{$C$@Operators!$P_{\alpha,k}f(t,x)$}%
 define
$$
p_{\alpha,k}(s,r)=\frac{1}{s^{(d+2-\alpha)/2}}e^{-r^{2}/(ks)}I_{s>0}, 
$$
$$
P_{\alpha,k}f(t,x)=\int_{\bR^{d+1} }p_{\alpha,k}(s,|y|)f(t+s,x+y)\,dyds.
$$
$$
=\int_{t}^{\infty}\int_{\bR^{d} }p_{\alpha,k}(s-t,|y-x|)
f(s,y)\,dsdy.
$$  

The following is Theorem 3.1 of \cite{3}.

\begin{theorem}
                     \label{theorem 10.5,1}
(i) There is a constant $c(d)>0$ such that
$u=c(d)P_{2,4}(\partial_{t}u+\Delta u)$
if $u\in C^{\infty}_{0}(\bR^{d+1})$.

(ii) For $\alpha,\beta,k>0$ we have
$P_{\alpha,k}P_{\beta,k}=c(\alpha,\beta,k)P_{\alpha+\beta,k}$. 

(iii) For any integer $n\geq1$, $\alpha>n$, and bounded $f$
with compact support we have $|D^{n}P_{\alpha,k}f|\leq N(d,\alpha,n)P_{\alpha-n,2\kappa}|f|$.

\end{theorem}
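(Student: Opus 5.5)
The three assertions are pure kernel computations, and I would treat them in turn. Throughout I write $p_{\alpha,k}(s,y)$ for $p_{\alpha,k}(s,|y|)$. Everything rests on two facts: $p_{2,4}$ is a constant multiple of the heat kernel, and Gaussians are closed under convolution.

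For (i), note that $p_{2,4}(s,y)=s^{-d/2}e^{-|y|^{2}/(4s)}=(4\pi)^{d/2}\Gamma(s,y)$, where $\Gamma$ is the Gauss–Weierstrass kernel. Let $T_{s}$ be the heat semigroup, and for $u\in C^{\infty}_{0}(\bR^{d+1})$ set $v(s)=T_{s-t}u(s,\cdot)$ for $s>t$. Then
$$
\frac{d}{ds}v(s)=T_{s-t}\big(\partial_{s}u+\Delta u\big)(s,\cdot),
$$
since $\partial_{s}T_{s-t}=\Delta T_{s-t}=T_{s-t}\Delta$. Integrate this from $s=t$ to $s=\infty$. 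Compact support makes $v(s)$ vanish for large $s$, and $v(s)\to u(t,\cdot)$ as $s\downarrow t$. This is Duhamel's formula and gives $u(t,x)$ as $\pm(4\pi)^{-d/2}P_{2,4}(\partial_{t}u+\Delta u)(t,x)$. So the constant is $c(d)=(4\pi)^{-d/2}$ up to the sign fixed by the paper's orientation of time in $P$. The only care needed is in justifying differentiation under the integral near $s=t$. That is standard because $u$ is smooth with compact support.

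For (ii), all kernels are nonnegative, so Fubini–Tonelli applies freely. By the change of variables already used in the definition of $P_{\alpha,k}$,
$$
P_{\alpha,k}P_{\beta,k}f(t,x)=\int p(s,y)f(t+s,x+y)\,dyds,
\qquad
p(s,y)=\int_{0}^{s}\!\!\int_{\bR^{d}}p_{\alpha,k}(s_{1},y_{1})\,p_{\beta,k}(s-s_{1},y-y_{1})\,dy_{1}ds_{1}.
$$
The spatial integral is a convolution of Gaussians:
$$
\int e^{-|y_{1}|^{2}/(ks_{1})}e^{-|y-y_{1}|^{2}/(ks_{2})}\,dy_{1}=(\pi k s_{1}s_{2}/s)^{d/2}e^{-|y|^{2}/(ks)},
\qquad s=s_{1}+s_{2}.
$$
With this, the powers of $s_{1}$ and $s-s_{1}$ reduce to a Beta integral:
$$
s^{-d/2}\int_{0}^{s}s_{1}^{\alpha/2-1}(s-s_{1})^{\beta/2-1}\,ds_{1}=B(\alpha/2,\beta/2)\,s^{(\alpha+\beta)/2-1-d/2}.
$$
Hence $p=(\pi k)^{d/2}B(\alpha/2,\beta/2)\,p_{\alpha+\beta,k}$. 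This gives (ii) with an explicit $c(\alpha,\beta,k)$.

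For (iii), which I regard as the only step needing a real estimate, I would first prove the pointwise kernel bound
$$
|D^{n}_{y}e^{-|y|^{2}/(ks)}|\leq N(n,d)\sum_{j\leq n}|y|^{j}s^{-(n+j)/2}e^{-|y|^{2}/(ks)}\leq N s^{-n/2}e^{-|y|^{2}/(2ks)}.
$$
The first inequality holds by induction, since each derivative produces either a factor $y/s$ or differentiates an existing polynomial factor. The second absorbs $(|y|/\sqrt s)^{j}$ into half of the exponent, using $\sup_{r}r^{j}e^{-r^{2}/(2k)}<\infty$. This yields $|D^{n}_{y}p_{\alpha,k}(s,y)|\leq N p_{\alpha-n,2k}(s,y)$; the "$2\kappa$" in the statement is this $2k$.

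It remains to differentiate $P_{\alpha,k}f=\int p_{\alpha,k}(s,y)f(t+s,x+y)\,dyds$ in $x$. I would move the derivatives onto the kernel, writing $P_{\alpha,k}f(t,x)=\int p_{\alpha,k}(s,y-x)f(t+s,y)\,dyds$. Differentiation under the integral is then justified by dominated convergence, with the majorant $N p_{\alpha-n,2k}(s,y-x)\sup|f|\,I_{\operatorname{supp}f}$ taken uniformly for $x$ in a bounded set.

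This majorant is integrable precisely because $\alpha>n$. Indeed, $\int p_{\alpha-n,2k}\,dy=Ns^{(\alpha-n)/2-1}$, which is integrable near $s=0$ iff $\alpha-n>0$, while compact support handles large $s$. This is exactly where the hypothesis $\alpha>n$ enters. Combining the two steps gives $|D^{n}P_{\alpha,k}f|\leq N(d,\alpha,n)P_{\alpha-n,2k}|f|$.
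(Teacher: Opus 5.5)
The paper gives no proof of this statement. It is quoted as Theorem 3.1 of the cited reference, so there is no in-paper argument to compare with. Your direct kernel computations are the natural self-contained route: Duhamel for (i), Gaussian convolution plus a Beta integral for (ii), and a pointwise bound on $D^{n}_{y}p_{\alpha,k}$ for (iii). Part (ii) is complete as written, with $c(\alpha,\beta,k)=(\pi k)^{d/2}B(\alpha/2,\beta/2)$. Two steps need tightening.

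In (i), do not leave the sign as $\pm$. $P_{2,4}$ integrates over future times, so your own identity integrated from $s=t$ to $s=\infty$ gives $v(\infty)-v(t+)=-u(t,\cdot)$, that is, $u=-(4\pi)^{-d/2}P_{2,4}(\partial_{t}u+\Delta u)$. The formula therefore holds with a \emph{negative} constant, equivalently $u=(4\pi)^{-d/2}P_{2,4}(-\partial_{t}u-\Delta u)$. The ``$c(d)>0$'' in the statement is a sign slip. The corrected sign is the one the paper actually uses later, when it says $P_{2,4}f$ solves $\partial_{t}v+\Delta v+f=0$ up to a positive factor.

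In (iii), the dominated-convergence justification as you phrase it fails. A majorant valid uniformly for $|x'-x|\le\delta$ would be $\sup_{|x'-x|\le\delta}p_{\alpha-n,2k}(s,y-x')$. This equals $s^{-(d+2-\alpha+n)/2}$ on $\{|y-x|\le\delta\}$. At interior points of the support of $f$ it is therefore not integrable near $s=0$ whenever $\alpha-n\le d$, which includes the case $\alpha=2$, $n=1$ used in the paper. The repair uses exactly the computation you made afterwards:
\begin{itemize}
\item[1.] For fixed $s>0$, differentiate $g_{s}(x)=\int p_{\alpha,k}(s,y-x)f(t+s,y)\,dy$ under the $dy$-integral; this is harmless because the kernel is smooth for $s>0$.
\item[2.] Observe that $\sup_{x}|D^{j}g_{s}(x)|\le N\sup|f|\,s^{(\alpha-j)/2-1}$, with no dependence on $x$.
\item[3.] Differentiate $P_{\alpha,k}f(t,\cdot)=\int_{0}^{S}g_{s}\,ds$ under the $ds$-integral one derivative at a time, $j=1,\dots,n$. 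This is legitimate because $\alpha>n\ge j$.
\end{itemize}

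Finally, your constant in (iii) necessarily depends on $k$ as well; for $n=1$ the optimal constant is $2e^{-1/2}k^{-1/2}$. So $N(d,\alpha,n)$ should read $N(d,\alpha,n,k)$. This is harmless here, since the paper only uses $k=4,8$.
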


For $p_{0}\in(1,d]$, $\alpha\in(0,d/p_{0}]$, and a 
real-valued or $\bR^{d}$-valued $b$ given on $\bR^{d+1}$
define ($(\alpha,p_{0})$-Morrey norm)
$$
\tilde b_{\alpha,p_{0}}:=\sup_{t\in\bR}\sup_{r>0}r^{\alpha}
\sup_{B\in \bB_{r}} 
\dashnorm b(t,\cdot) \|_{L_{p_{0}}(B)}.
$$
 
For $r,s\in(1,\infty)$ we define the space
$L_{r,s}$ as the set of function $f$ on $\bR^{d+1}$ with 
$\|f\|_{L_{r,s}}<\infty$, where the norm is
defined according to
\eqref{4.3.1} with $r,s$ in place of $q,p$
and the first integral   taken over $\bR$
instead of $(0,T)$.

\begin{theorem} 
                        \label{theorem 3.27,1}
Let  $b(t,x)\geq0$ and $r,s >p_{0}':=p_{0}/(p_{0}-1)$. Then for any $f(t,x)\geq0$
\begin{equation}
                            \label{5.25,1}
\| P_{ \alpha,k}(bf)\|_{L_{r,s }}
\leq N\tilde b_{\alpha,p_{0}}
\|f\|_{L_{r,s}},
\end{equation}
where $N$ depends only on $d,\alpha,r ,s,p_{0},k$.
 
\end{theorem}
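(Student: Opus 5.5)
The plan is to reduce \eqref{5.25,1} to a pointwise bound of $P_{\alpha,k}(bf)$ by a maximal-type operator applied to a power of $f$, and then use mixed-norm maximal inequalities; the room $r,s>p_{0}'$ is what makes this work. The estimate is invariant under the parabolic scaling $(t,x)\to(c^{2}t,cx)$, because $\tilde b_{\alpha,p_{0}}$ scales like $c^{-\alpha}$ and the kernel $p_{\alpha,k}$ supplies exactly $c^{\alpha}$. So no normalization is lost in what follows, and we may assume $\tilde b_{\alpha,p_{0}}=1$.

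\emph{Step 1: spatial Hölder at fixed $s$.} Fix $(t,x)$ and $s>0$, and pick $p_{1}\in(p_{0}',r\wedge s)$. Apply Hölder's inequality in $y$ with respect to the Gaussian measure $\gamma_{s}(dy)=s^{-d/2}e^{-|y|^{2}/(ks)}dy$, with exponents $p_{1}'$ on $b$ and $p_{1}$ on $f$:
\begin{equation*}
\int_{\bR^{d}} p_{\alpha,k}(s,|y|)\,(bf)(t+s,x+y)\,dy\le s^{-(2-\alpha)/2}\Big(\int b^{p_{1}'}\,\gamma_{s}(dy)\Big)^{1/p_{1}'}\Big(\int f^{p_{1}}\,\gamma_{s}(dy)\Big)^{1/p_{1}}.
\end{equation*}
Since $p_{1}'<p_{0}$, the Morrey bound transfers to $L_{p_{1}'}$ on balls with the same $\alpha$. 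Decomposing $\bR^{d}$ into the ball of radius $\sqrt s$ and dyadic annuli, and using the Gaussian decay to sum the annuli (this is fine also when $\alpha p_{0}=d$), I expect
\begin{equation*}
\Big(\int b^{p_{1}'}\,\gamma_{s}(dy)\Big)^{1/p_{1}'}\le N s^{-\alpha/2}.
\end{equation*}
This gives
\begin{equation*}
P_{\alpha,k}(bf)(t,x)\le N\int_{0}^{\infty}s^{-1}\big(\gamma_{s}*f^{p_{1}}(t+s,\cdot)\big)^{1/p_{1}}(x)\,ds .
\end{equation*}

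\emph{Step 2: the main obstacle.} The factor $s^{-1}$ is exactly non-integrable at both ends, so the crude bound of Step 1 cannot be summed directly. This is the step I expect to be hardest.

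What I would try first is to use the duality $\int P_{\alpha,k}(bf)\,\phi=\int bf\,P^{*}_{\alpha,k}\phi$ together with the semigroup identity from Theorem \ref{theorem 10.5,1}(ii), $P_{\alpha,k}=c^{-1}P_{\alpha/2,k}P_{\alpha/2,k}$. The idea is to place one half of the kernel on $\phi$ and the other half on $f$, so that the problem becomes a Fefferman--Phong/Adams type trace inequality of the form
\begin{equation*}
\int b\,|g|\,|h|\,dxdt\le N\,\|g\|\,\|h\|
\end{equation*}
for $g=P_{\alpha/2,k}f$ and $h=P_{\alpha/2,k}\phi$, with suitable mixed norms on the right. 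I would prove that trace inequality by a dyadic (Hedberg-type) splitting of the $s$-integral at a scale $\rho=\rho(t,x)$. On $s<\rho$ I would bound by the one-sided parabolic maximal function $\mathcal M(f^{p_{1}})^{1/p_{1}}$. On $s>\rho$ I would instead use the extra integrability $r,s>p_{1}$, via Hölder in $(s,y)$ against $\|f\|_{L_{r,s}}$, which yields a decaying power of $\rho$. Optimizing $\rho$ should turn the logarithm into a convergent geometric sum with a small power gain coming from $p_{1}<r\wedge s$.

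\emph{Step 3: conclusion.} Once the pointwise bound has the form $N\,\mathcal M_{x}\mathcal M_{t}(f^{p_{1}})^{1/p_{1}}$, possibly up to a power interpolation with $\|f\|_{L_{r,s}}$, I finish with the boundedness of the one-dimensional and $d$-dimensional Hardy--Littlewood maximal operators in $L_{r/p_{1}}$ in time and $L_{s/p_{1}}$ in space, taken in the order of integration used in \eqref{4.3.1}. For the inner spatial operator this needs the vector-valued (Fefferman--Stein) form of the maximal inequality. Since $r/p_{1},s/p_{1}>1$, this gives \eqref{5.25,1} with $N=N(d,\alpha,r,s,p_{0},k)$.
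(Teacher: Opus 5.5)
\textbf{There is a genuine gap.} Your Step 1 is correct. The plan fails in Steps 2--3, and the failure is structural: the pointwise majorant you aim for in Step 3 does not exist, so no Hedberg-type splitting can produce it.

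Take $\alpha p_{0}<d$ (e.g.\ $\alpha=1$, $p_{0}<d$, the case the paper uses), $b(x)=|x|^{-\alpha}$, for which $\tilde b_{\alpha,p_{0}}<\infty$, and $f=I_{[0,1)\times B_{1}}$. Substituting $y=\sqrt{s}\,z$,
\begin{equation*}
\int_{B_{1}}p_{\alpha,k}(s,|y|)\,|y|^{-\alpha}\,dy=s^{-1}\int_{B_{1/\sqrt{s}}}e^{-|z|^{2}/k}|z|^{-\alpha}\,dz\geq c\,s^{-1},\quad s\in(0,1),
\end{equation*}
so $P_{\alpha,k}(bf)(0,0)=\infty$. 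For the bounded truncations $b\wedge n$, whose Morrey norms are uniformly bounded, $P_{\alpha,k}((b\wedge n)f)(0,0)\to\infty$. Yet $\mathcal M_{x}\mathcal M_{t}(f^{p_{1}})^{1/p_{1}}\leq1$ and $\|f\|_{L_{r,s}}<\infty$.

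So the divergence of $\int_{0}s^{-1}\,ds$ at $s\downarrow0$ that you found is real, and splitting at a scale $\rho$ cannot remove it. The near part is again controlled only by $\mathcal M(f^{p_{1}})^{1/p_{1}}\int_{0}^{\rho}s^{-1}\,ds$. The gain from $p_{1}<r\wedge s$ (H\"older against $\|f\|_{L_{r,s}}$) helps only for large $s$. The $L_{r,s}$ bound holds because such logarithmic singularities are of BMO type, which no pointwise bound by maximal functions of $f$ can detect.

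Your duality reformulation also does not match the operator. $P_{\alpha,k}(bf)$ is a constant multiple of $P_{\alpha/2,k}P_{\alpha/2,k}(bf)$, so both halves of the kernel act after multiplication by $b$. Pairing with $\phi$ gives $\int bf\,P^{*}_{\alpha,k}\phi$ with $f$ appearing bare. The form $\int b\,(P_{\alpha/2,k}f)(P^{*}_{\alpha/2,k}\phi)$ belongs to the different operator $f\mapsto P_{\alpha/2,k}(b\,P_{\alpha/2,k}f)$.

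The missing idea, which is the paper's route, is to replace the divergent sum over scales by a supremum over scales, using the sharp function.
\begin{itemize}
\item Reduce to bounded, compactly supported $b,f$, so that $P_{\alpha,k}(bf)\in L_{r,s}$.
\item The mixed-norm Fefferman--Stein theorem of Dong--Kim gives $\|P_{\alpha,k}(bf)\|_{L_{r,s}}\leq N\|P^{\sharp}_{\alpha,k}(bf)\|_{L_{r,s}}$.
\item The sharp function of the potential is dominated pointwise, up to a constant, by the fractional maximal function $\bM_{\alpha}(bf)(t,x)=\sup_{\rho>0}\rho^{\alpha}\sup_{C\in\bC_{\rho},\,C\ni(t,x)}\dashint_{C}bf\,dyds$.
\item H\"older's inequality is then used in a single cylinder, $\rho^{\alpha}\dashint_{C}bf\,dyds\leq\tilde b_{\alpha,p_{0}}\big(\dashint_{C}f^{p_{0}'}\,dyds\big)^{1/p_{0}'}$, so $\bM_{\alpha}(bf)\leq\tilde b_{\alpha,p_{0}}\bM(f^{p_{0}'})^{1/p_{0}'}$.
\item The maximal inequality in $L_{r/p_{0}',s/p_{0}'}$ finishes, directly with $p_{0}'$ and no auxiliary $p_{1}$.
\end{itemize}
In the example above, $\bM_{\alpha}(bf)\leq\tilde b_{\alpha,p_{0}}$ is bounded although $P_{\alpha,k}(bf)$ is not. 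That is exactly what the sharp-function step buys.
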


Proof. We may assume that $b,f$ are bounded
and have compact support. Then Minkowski's
inequality and the fact that, for any cylinder
$C$ we have
$P_{ \alpha,k}I_{C}(0,0)<\infty$, show  that 
$P_{ \alpha,k}(bf)\in L_{r,s}$. Then by
the Dong-Kim Theorem 6.2 of \cite{Kr_26_1}
$$
\| P_{ \alpha,k}(bf)\|_{L_{r,s }}\leq N
\| P^{\sharp}_{\alpha,k}(bf)\|_{L_{r,s }}.
$$ 
By Theorem 4.6 of \cite{Kr_26_1} the last norm
is dominated by a constant times the $L_{r,s }$-norm of
$$
\bM_{\alpha}(bf)(t,x):=\sup_{r>0} r^{\alpha}
\sup_{\substack{C\in \bC_{r}\\
C\ni (t,x)}}\dashint_{C }bf\,dyds.
$$
Here,  if $C=C_{r}(t,x)$, in the last integral  
$$
\dashint_{B_{r}(x)}(bf)(s,y)\,dy
\leq \Big(\dashint_{B_{r}(x)}b^{p_{0}}(s,y)\,dy\Big)^{1/p_{0}}
\Big(\dashint_{B_{r}(x)}f^{p'_{0}}(s,y)\,dy\Big)^{1/p'_{0}}
$$
implying that
$$
r^{\alpha} \dashint_{C_{r}(t,x)}bf\,dyds
\leq \tilde b_{\alpha,p_{0}} \Big(\dashint_{C_{r}(t,x)} f^{p'_{0}}(s,y)\,dyds\Big)^{1/p'_{0}} .
$$
Hence
$$
\| P_{ \alpha,k}(bf)\|_{L_{r,s }}\leq N
\tilde b_{\alpha,p_{0}}\| \bM(f^{p_{0}'})\|^{1/p'_{0}}_{L_{r/p_{0}',s/p_{0}' }},
$$
where $\bM$ is the maximal Hardy-Littlewood
operator. Since $r,s >p_{0}'$, by Theorem
6.1 of \cite{Kr_26_1} the last norm is dominated by a constant times
$$
\|   f^{p_{0}'} \|^{1/p'_{0}}_{L_{r/p_{0}',s/p_{0}' }}=\| f\|_{L_{r,s }}.
$$
This proves the theorem. \qed

\begin{remark}
                      \label{remark 3.30.1}
(i) Cleary, if $p_{0}'\leq
 2$, $r,s>1$ and $d/s+2/r< 1$, then 
$r,s>p'_{0}$.

(ii) The norm of the operator $P_{\alpha,k}:
L_{r,s}\to L_{r,s}$ is easily shown to go to
infinity as $r\to\infty$ and $s=$const or as $s\to\infty$ and $r=$const. Therefore, the above
computations show that the same holds for
the operator $\bM$. However, the norm $N_{cr,cs}$
of $\bM:L_{cr,qs}\to L_{cr,cs}$ tends to one
as $c\to \infty$. This follows from H\"older's
inequality implying that $N_{cr,cs}\leq
N_{r,s}^{1/c}$ for $c\geq1$.

\end{remark}

\begin{corollary}
                 \label{corollary 10.5,1}
Estimate \eqref{5.25,1} says that the operator
$f\to P_{\alpha,k}(bf)$ is bounded in $L_{r,s}$. Its conjugate (with time reversed)
is then also bounded as an operator in 
$L_{r',s' }$,   that is, if $1<r',s'<p_{0}$, then
$$
\|bP_{\alpha,k}f\|_{L_{r',s'}}
\leq N\|b\|_{\dot E_{p_{0},\alpha} }
\|f\|_{L_{r',s'}}.
$$
 
\end{corollary}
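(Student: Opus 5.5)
This is a duality argument. Since $r,s>p_0'$, the conjugate exponents satisfy $r',s'<p_0$. We test $bP_{\alpha,k}f$ against an arbitrary $g$ and move the operator onto $g$ by Fubini. This turns the estimate into Theorem \ref{theorem 3.27,1} applied to the adjoint kernel, which is the same kernel with time reversed.

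By splitting $f=f^+-f^-$, using $|P_{\alpha,k}f|\leq P_{\alpha,k}|f|$, and replacing $b$ by $|b|$, we may assume $f,b\geq0$. Here $|b|$ has the same Morrey norm $\tilde b_{\alpha,p_0}$, which is what $\|b\|_{\dot E_{p_0,\alpha}}$ denotes. We may also assume $f$ is bounded with compact support and obtain the general case by monotone convergence.

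Take any $g\geq0$ in $L_{r,s}$, where $1/r+1/r'=1$ and $1/s+1/s'=1$. By Tonelli,
$$
\int_{\bR^{d+1}} g\,bP_{\alpha,k}f\,dxdt
=\int_{\bR^{d+1}} f(s,y)\int_{\bR^{d+1}} p_{\alpha,k}(s-t,|y-x|)(bg)(t,x)\,dxdt\,dyds .
$$
The inner integral is $\tilde P_{\alpha,k}(bg)(s,y)$, where $\tilde P_{\alpha,k}$ is defined like $P_{\alpha,k}$ but integrates over the past ($t<s$) instead of the future.

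Set $\tilde h(t,x)=h(-t,x)$ for any function $h$. The change of variables $t\to-t$ gives
$$
\tilde P_{\alpha,k}(bg)=\big(P_{\alpha,k}(\tilde b\tilde g)\big)^{\sim}.
$$
Time reflection preserves both the $L_{r,s}$ norm and the Morrey norm, since the latter is a supremum over $t\in\bR$. Because $r,s>p_0'$, Theorem \ref{theorem 3.27,1} gives
$$
\|\tilde P_{\alpha,k}(bg)\|_{L_{r,s}}\leq N\tilde b_{\alpha,p_0}\|g\|_{L_{r,s}} .
$$
Hölder's inequality for mixed norms then yields
$$
\int g\,bP_{\alpha,k}f\,dxdt\leq N\tilde b_{\alpha,p_0}\|g\|_{L_{r,s}}\|f\|_{L_{r',s'}} .
$$

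It remains to take the supremum over $g\geq0$ with $\|g\|_{L_{r,s}}\leq1$. We need the mixed-norm duality $(L_{r',s'})^*=L_{r,s}$, in the form that a nonnegative function's $L_{r',s'}$ norm equals this supremum. This holds for $1<r',s'<\infty$ by the standard iterated Riesz argument.

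The only delicate point is that $bP_{\alpha,k}f$ is not known a priori to lie in $L_{r',s'}$. We handle this by truncation: replace $bP_{\alpha,k}f$ by $\min(bP_{\alpha,k}f,n)I_{C}$ for a bounded cylinder $C$. Duality applies to this truncated function, and its pairing with $g$ is bounded by the same right-hand side. Letting $n\to\infty$ and $C\uparrow\bR^{d+1}$ gives the claim by monotone convergence.
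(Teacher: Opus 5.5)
Your proposal is correct and is exactly the argument the paper has in mind. The paper gives no separate proof and simply asserts the corollary as the time-reversed adjoint of Theorem~\ref{theorem 3.27,1}; you carry this out explicitly via Tonelli, time reflection (which preserves both the $L_{r,s}$ norm and the Morrey norm), mixed-norm H\"older, and the norming of $L_{r',s'}$ by $L_{r,s}$, and your truncation step (so as not to assume $bP_{\alpha,k}f\in L_{r',s'}$ a priori) fills in a detail the paper leaves implicit.
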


\begin{remark}
                     \label{remark 4.5.3}
Literally repeating the proof of Theorem \ref{theorem 3.27,1}, one shows that  under its
assumption  for any $f(t,x)\geq0$
\begin{equation}
                            \label{5.25,1}
\| P_{ \alpha,k}(bf)\|_{\sfL_{s,r }}
\leq N\tilde b_{\alpha,p_{0}}
\|f\|_{\sfL_{s,r }},
\end{equation}
where $N$ depends only on $d,\alpha,r ,s,p_{0},k$ and 
$$
\|f\|^{s}_{\sfL_{s,r }}=\int_{\bR^{d}}
\Big(\int_{\bR}|f(t,x)|^{r}\,dt\Big)^{s/r}\,dx.
$$
\end{remark}

\mysection{Proof of Theorem \ref{theorem 3.31.1}}
                    \label{section 4.3.1}

\begin{theorem}
                     \label{theorem 3.27,2}
Assume that  
\begin{equation}
                      \label{3.27.3}
p,q\in(1,\infty),\quad \frac{d}{p}+\frac{2}{q}<2,\quad 2p,2q>p_{0}'.
\end{equation}
 Then there is a constant $\tilde b>0$
depending only on $d,q,p,p_{0}$ such that, if 
$\tilde b_{p_{0}}:=\tilde b_{1,p_{0}}\leq\tilde b$, then for 
  any bounded $f\geq 0$ with compact support,
vanishing for $t> T$,
and the solution $u$ of
\eqref{3.27.1} estimate \eqref{3.27.2}
holds  with
 $N_{0}$ depending only on $d,q,p,p_{0}$.

\end{theorem}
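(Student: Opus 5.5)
We treat $u$ as a perturbation of the free heat potential and control the drift term with Theorem \ref{theorem 3.27,1}. First we rescale. Under $u(t,x)\to u(T t,\sqrt T x)$ the class of drifts with $\tilde b_{1,p_0}\le\tilde b$ is invariant, because $\tilde b_{1,p_0}$ is scale invariant. Tracking the $L_{q,p}$-norm of $f$ produces exactly the factor $T^{1-d/(2p)-1/q}$. So it suffices to take $T=1$ and bound $|u(0,0)|$, and by translation invariance the same bound holds for $u(t,x)$ at every point.

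By Theorem \ref{theorem 10.5,1}(i), applied after a standard localization and approximation (with $u$ extended by zero for $t>T$), we get
\[
u=c(d)P_{2,4}(f+b^iD_iu)\quad\text{and}\quad Du=c(d)DP_{2,4}(f+b^iD_iu).
\]
Theorem \ref{theorem 10.5,1}(iii) then gives
\[
|Du|\le N P_{1,8}\big(|f|+|b||Du|\big).
\]
Set $v=|Du|$. Since $2p,2q>p_0'$, Remark \ref{remark 4.5.3} or Theorem \ref{theorem 3.27,1} applies with $\alpha=1$ in the spaces $L_{2q,2p}$ (or in suitable $L_{r,s}$ with $r,s>p_0'$). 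It gives $\|P_{1,8}(|b|v)\|\le N\tilde b\,\|v\|$. Choosing $\tilde b$ so small that $N\tilde b\le1/2$, we absorb this term and obtain
\[
\|v\|\le N\|P_{1,8}|f|\|.
\]
An a priori finiteness of $\|v\|$ is needed for the absorption. It holds because $f$ is bounded with compact support, after a cutoff or by working with $b$ truncated and then passing to the limit.

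We next return to $u$. By Theorem \ref{theorem 10.5,1}(ii), $P_{2,4}$ factors as $cP_{1,k}P_{1,k}$ (after harmlessly adjusting $k$, which is only a domination issue for the kernels). Hence
\[
|u(0,0)|\le N\big[P_{2,4}|f|(0,0)+P_{1}\big(|b|P_{1}(|b|v)\big)(0,0)\big].
\]
The term $P_{2,4}|f|(0,0)$ is bounded by $N\|f\|_{L_{q,p}}$ using Hölder's inequality and the condition $d/p+2/q<2$. This is the trivial $b=0$ computation with $T=1$; the kernel restricted to $s<1$ lies in $L_{q',p'}$ exactly when $d/p+2/q<2$.

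For the drift term we use duality. We want
\[
P_{2}(|b|v)(0,0)=\int p_2(s,|y|)|b|v\,dyds.
\]
Since $p_2=cP_1p_1$ in a convolution sense, and Corollary \ref{corollary 10.5,1} (the adjoint statement) bounds $|b|P_1g$ in the dual spaces, we get
\[
\int p_2|b|v\le \|v\|_{L_{r,s}}\,\big\||b|\,(\text{kernel }p_{2}\text{ restricted to }s<1)\big\|_{L_{r',s'}}.
\]
We then write $p_2\le N P_{1}p_1$-type and apply Corollary \ref{corollary 10.5,1} again. Combined with $\|v\|\le N\|P_{1}|f|\|$, this reduces everything to bounds on $P_{1}|f|$ in $L_{r,s}$ with $d/s+2/r=d/p+2/q-1$ (a Sobolev-type mapping of $P_1$). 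The final pairing then closes by Hölder's inequality with the kernel $p_1(s,|y|)I_{s<1}\in L_{r',s'}$, provided $d/s+2/r<1$, which is precisely our hypothesis. The condition $2p,2q>p_0'$ guarantees that $r,s>p_0'$ can be chosen in this range.

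The main obstacle is this bookkeeping of exponents. We must pick one pair $(r,s)$ such that: $P_1:L_{q,p}\to L_{r,s}$ on unit time (a Hardy–Littlewood–Sobolev step, possibly lossy but harmless since $T=1$); $r,s>p_0'$, so the absorption via Theorem \ref{theorem 3.27,1} works; and the dual kernel bound holds. If a single pair does not work, the first thing to try is to iterate, gaining regularity one step at a time in $L_{2q,2p}$-type spaces.
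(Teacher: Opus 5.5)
Your argument is essentially the paper's: you absorb $P_{1,8}(|b|\,|Du|)$ in $L_{2q,2p}$ by Theorem \ref{theorem 3.27,1}, using $2q,2p>p_0'$ and small $\tilde b$, and then bound $P_{2,4}(|b|\,|Du|)(0,0)$ through $P_{2,4}\le NP_{1,8}P_{1,8}$ and H\"older against $p_{1,8}I_{(0,1)}\in L_{(2q)',(2p)'}$, where your pairing via the adjoint bound of Corollary \ref{corollary 10.5,1} is just the dual form of the paper's applying Theorem \ref{theorem 3.27,1} to $g=P_{1,8}(|b|\,|Du|)$. Two corrections: the display containing $P_1(|b|P_1(|b|v))$ has a spurious factor $|b|$ (the term is $P_{2,4}(|b|v)(0,0)$, as you use right after), and the exponent ``obstacle'' is not one, because the single pair $(r,s)=(2q,2p)$ works throughout, since $d/(2p)+1/q<1$ is exactly what gives both $P_{1,8}:L_{q,p}\to L_{2q,2p}$ on $(0,1)$ (Young's inequality in $x$ and then in $t$, in place of the embedding theorem the paper cites) and $p_{1,8}I_{(0,1)}\in L_{(2q)',(2p)'}$.
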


Proof. First let $T=1$, but keep notation $T$ (for $1$)
 to show its role better.
Observe that
$$
\frac{d}{p}+\frac{2}{q}-1<\frac{d}{2p}+\frac{2}{2q}<1.
$$

Then we
know that $v:=P_{2,4}f$ is in $ W^{1,2}_{q,p}(  \bR^{d}_{T})$, satisfies $\partial_{t}v+\Delta v
+f=0$, vanishes for $t>T$, and
$$
\|\partial v,D^{2}v,Dv,v\|_{L_{q,p}(  \bR^{d}_{T})}\leq 
N\|f\|_{L_{q,p}(  \bR^{d}_{T})}.
$$
By embedding theorems (see Theorem 10.2
of \cite{BIN_75}) 
$$
\|DP_{2,4}f\|_{L_{2q,2p}(\bR^{d}_{T})}\leq N\|f\|_{L_{q,p}(\bR^{d}_{T})},
\quad |P_{2,4}f(0,0)|\leq N\|f\|_{L_{q,p}(  \bR^{d}_{T})}.
$$

Then observe that
$$
u=P_{2,4}(b^{i}D_{i}u)-P_{2,4}f,\quad
|Du|\leq NP_{1,8}(|b|\,|Du|)+|DP_{2,4}f|.
$$
It follows from Theorem \ref{theorem 3.27,1} that
$$
\|Du\|_{L_{2q,2p}(\bR^{d}_{T})}\leq
N(d,q,p,p_{0})\tilde b_{p_{0}}\|Du\|_{L_{2q,2p}( \bR^{d}_{T})}+N\|f\|_{L_{q,p}(  \bR^{d}_{T})}.
$$
With $N(d,q,p,p_{0})\tilde b_{p_{0}}\leq 1/2$ we get
$$
\|Du\|_{L_{2q,2p}(\bR^{d}_{T})}
\leq N\|f\|_{L_{q,p}(\bR^{d}_{T})},\quad 
\|P_{1,8}(|b|\,|Du|)\|_{L_{2q,2p}(\bR^{d}_{T})}\leq N\tilde b_{p_{0}}
\|f\|_{L_{q,p}(\bR^{d}_{T})}.
$$

After that set $g=P_{1,8}(|b|\,|Du|)$ and note that 
$$
|P_{2,4}(b^{i}D_{i}u)(0,0)|
\leq NP_{1,8}g(0,0)\leq \|g\|_{L_{2q,2p}(\bR^{d}_{T})}\|p_{1,8}\|_{L_{(2q)',(2p)'}(\bR^{d}_{T})}.
$$
 An elementary
computation shows that the last norm is finite
and hence
$$
|u(0,0)|\leq |P_{2,4}(b^{i}D_{i}u)(0,0)|+|P_{2,4}f(0,0)|\leq N\|f\|_{L_{q,p}}.
$$
Similarly one estimates $|u|$ at any other
point in $\bR^{d}_{T}$. This proves the theorem
for $T=1$. For other values of $T$
the result is obtained by using self-similar transformations.
\qed

\begin{theorem}
                        \label{theorem 3.27,3}
Suppose that for a function $b$, 
$q,p\in(1,\infty)$,  for a  bounded $f\geq0$ with compact support
vanishing for $t>T$
and the solution of \eqref{3.27.1}
we have \eqref{3.27.2} 
with some constant $N_{0}$. Let
$\scB=\scB(t,x)$
be an $\bR^{d}$-valued function on
$\bR^{d+1}$. Then for   the solution $v$ of
\begin{equation}
                      \label{3.27.10}
\partial_{t}v+\Delta v+(b^{i}+\scB^{i})D_{i} 
v=f
\quad \text{in} \quad \bR^{d}_{T},\quad v(T,\cdot)=0
\end{equation}
we have
\begin{equation}
                      \label{3.27.20}
\sup_{  \bR^{d}_{T}}|v |\leq \sqrt2 e^{[\scB]^{2}/2}N_{0}T^{1-d/(2p)-1/q}
\|f\|_{L_{q,p}},
\end{equation}
where  
$$
[\scB]^{2}:=\int_{0}^{T}\sup_{\bR^{d}}|\scB(t,x)|^{2}\,dt .
$$
\end{theorem}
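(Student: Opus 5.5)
Theorem~\ref{theorem 3.27,3} is the classical Girsanov argument combined with H\"older's inequality, and I would follow it as laid out below.

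\emph{Probabilistic representations.} Fix $(t_0,x_0)\in\bR^{d}_{T}$. Since $b,\scB,f$ are bounded, the SDE
$$
dX_s=b(s,X_s)\,ds+\sqrt2\,dw_s,\qquad X_{t_0}=x_0,
$$
has a weak solution (in fact a unique strong one), and by It\^o's formula for $W^{1,2}_{p,\loc}$ functions (Krylov's estimates) the bounded solution of \eqref{3.27.1} is
$$
u(t_0,x_0)=E\int_{t_0}^{T}f(s,X_s)\,ds .
$$
Define the Dol\'eans exponential
$$
\rho=\exp\Big(\int_{t_0}^{T}\tfrac{1}{\sqrt2}\scB(s,X_s)\,dw_s-\tfrac14\int_{t_0}^{T}|\scB(s,X_s)|^{2}\,ds\Big).
$$
By Girsanov's theorem, under $\rho\,dP$ the process $X$ solves the SDE with drift $b+\scB$. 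This is legitimate because $\scB$ is bounded, so Novikov's condition holds trivially. Hence the solution of \eqref{3.27.10} is
$$
v(t_0,x_0)=-E\Big[\rho\int_{t_0}^{T}f(s,X_s)\,ds\Big],
$$
the sign coming from the $+f$ on the right of \eqref{3.27.10}.

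\emph{H\"older/Cauchy--Schwarz step.} Since $f\ge0$, Cauchy--Schwarz gives
$$
|v(t_0,x_0)|\le (E\rho^2)^{1/2}\Big(E\Big(\int_{t_0}^{T}f(s,X_s)\,ds\Big)^{2}\Big)^{1/2}.
$$
For the first factor, write $\rho^2$ as a new exponential martingale times $\exp\big(\tfrac12\int|\scB|^2ds\big)$. This gives
$$
E\rho^2\le \exp\Big(\tfrac12\int_{t_0}^{T}\sup_x|\scB(s,x)|^{2}\,ds\Big)\le e^{[\scB]^2/2}.
$$
With the generator normalized as $\Delta$, the constants must be checked carefully, and the precise exponent $[\scB]^2/2$ in the end result has to come out of this bookkeeping. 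For the second factor, use the standard identity
$$
E\Big(\int_{t_0}^{T}f(s,X_s)\,ds\Big)^{2}=2E\int_{t_0}^{T}f(s,X_s)\int_{s}^{T}f(r,X_r)\,dr\,ds .
$$
By the Markov property, the inner conditional expectation equals $u(s,X_s)$. Hypothesis \eqref{3.27.2} bounds it by $M:=N_0T^{1-d/(2p)-1/q}\|f\|_{L_{q,p}}$; this bound is uniform in the starting point and also in shorter horizons, since $f$ restricted to $[s,T]$ has smaller norm and the time interval is shorter. Hence the second moment is at most $2M\cdot E\int_{t_0}^{T}f(s,X_s)\,ds\le 2M^2$. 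Combining the two factors gives $|v|\le\sqrt2\,e^{[\scB]^2/4}M$, which is at least as good as \eqref{3.27.20}. The stated constant leaves room for a cruder exponent, e.g.\ from not using the martingale factorization optimally.

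\emph{Main obstacle.} The delicate point is justifying the representations and the Markov step without regularity of $b$ beyond boundedness. Strong existence and uniqueness for bounded drift (Zvonkin/Veretennikov) give a Markov process. It\^o's formula applies to $u\in W^{1,2}_{p,\loc}$ by Krylov's estimate
$$
E\int |g(s,X_s)|\,ds\le N\|g\|_{L_{d+1}}.
$$
Alternatively, one can avoid these issues by mollifying $b$ and $\scB$, proving the estimate for smooth coefficients, and passing to the limit. The hypothesis is applied to the mollified $b$ only as an assumption about $b$ itself, so one must ensure the inequality \eqref{3.27.2} is used for the original $b$. For this reason I would keep $b$ unmollified and rely on the Markov process from Veretennikov's theorem, mollifying only $f$ if needed.
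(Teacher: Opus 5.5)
Your proposal is correct and follows essentially the same route as the paper. Both use the Girsanov representation of $v$ through the process with drift $b$, the Markov-property bound $E\big(\int f(s,X_s)\,ds\big)^2\le 2\sup|u|^2$, and Cauchy--Schwarz. Your bookkeeping $E\rho^2\le e^{[\scB]^2/2}$ gives the sharper factor $e^{[\scB]^2/4}$ after the square root, which implies the stated $e^{[\scB]^2/2}$; the sign of $f$ in \eqref{3.27.10} does not affect $|v|$.
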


Proof. To make the argument simpler we assume
that $T=1$ claiming that the case of general $T$ is taken care of by self-similar
 transformations. Still
we use $T$ (for $1$) to show its role better.
Let $\Omega$ be the set of functions
$(t+\cdot,x_{\cdot})=\{(t+s,x_{s}),s\geq0\}$, where $t\in\bR$ and $x_{\cdot}$ is an $\bR^{d}$-valued continuous
function. It is a Polish space with metric
$$
\rho\big((t'+\cdot,x'_{\cdot}),(t''+\cdot,x''_{\cdot})\big)=|t'-t''|+\sup_{s\geq0}
|x'_{s}-x''_{s}|.
$$
It is well known that, due to the boundedness
of $b$, there exist probability measures $P_{t,x}$, $t\in\bR,x\in\bR^{d}$, on $\Omega$
such that for each $t\in\bR,x\in\bR^{d}$ with $P_{t,x}$-probability  one
the  equation
\begin{equation}
                             \label{3.28.1}
x_{s}=x+\sqrt 2 w_{s}+\int_{0}^{s}b(t+s,x_{s})\,ds,
\end{equation}
holds, where $w_{s}$ is a $d$-dimensional Wiener process relative to $P_{t,x}$. This fact is easily
obtained, for instance, by taking any Wiener
process and making   appropriate changes
of measure based on Girsanov's theorem.
Then one also obtains that solutions of 
\eqref{3.28.1} form a Markov process. 
Girsanov's theorem implies that for any nonnegative $f(t,x)$ and $(t,x)\in\bR^{d+1}$
we have
\begin{equation}
                             \label{3.28.2}
E_{t,x}\int_{0}^{T}f(t+s,x_{s})\,ds
=Ee^{\phi_{T}(t,x)}\int_{0}^{T}f(t+s,x+\sqrt2w_{s})\,ds,
\end{equation}
where $w_{\cdot}$ is a $d$-dimensional Wiener
process  and
$$
\phi_{T}(t,x)=2^{-1/2}\int_{0}^{T}
b(t+s,x+\sqrt2w_{s})\,dw_{s}-(1/4)\int_{0}^{T}
|b(t+s,x+\sqrt2w_{s})|^{2}\,ds.
$$
It is also well known that for any
$\lambda\geq0$ and $t\leq T$
\begin{equation}
                             \label{3.28.3}
E e^{\lambda\phi_{T-t}(t,x)}\leq e^{\lambda^{2}[\scB]^{2}/4}.
\end{equation}
This and the H\"older's inequality
show (see more details later) that the left-hand side of \eqref{3.28.2} admits the same $L_{q,p}$-estimates as if we had $b\equiv0$.
This allows us to use It\^o's formula
and for the solution $u$ of \eqref{3.27.1}
and $(t,x)\in\bR^{d}_{T}$
obtain that
$$
u(t,x)=E_{t,x}\int_{0}^{T-t}f(t+s,x_{s})\,ds.
$$
Now the assumed estimate \eqref{3.27.2}
and the Markov property of $(t+\cdot,x_{\cdot})$
imply that (the standard argument) for $f\geq0$
$$
E_{0,0}\Big(\int_{0}^{T}f(s,x_{s})\,ds\Big)^{2}
=2E_{0,0}\int_{0}^{T}f(s,x_{s})u(s,x_{s})\,ds
$$
$$
\leq 2 \sup_{\bR^{d}_{T}}|u |^{2}\leq 2N^{2}_{0}\|f\|^{2}_{L_{q,p}}.
$$

Finally, by Girsanov's theorem
$$
v(t,x)=E_{t,x}e^{\psi_{T-t}(t )}
\int_{0}^{T-t}f(t+s,x_{s})\,ds,
$$
where
$$
\psi_{T-t}(t )=
2^{-1/2}\int_{0}^{T-t}
\scB(t+s,x_{s})\,dw_{s}-(1/4)\int_{0}^{T-t}
|\scB(t+s,x_{s})|^{2}\,ds.
$$
It follows that
$$
|v(0,0)|\leq \Big(E_{0,0}e^{2\psi_{T-t}(t )}
\Big)^{1/2}\Big(E_{0,0} \Big(
\int_{0}^{T-t}f(t+s,x_{s})\,ds\Big)^{2}\Big)^{1/2}
$$
$$
\leq \sqrt2 e^{[\scB]^{2}/2}N_{0}\|f\|^{2}_{L_{q,p}}.
$$
Similarly one estimates $|v|$ at any other
point in $\bR^{d}_{T}$. \qed

{\bf End of proof of Theorem \ref{theorem 3.31.1}}. By using the maximum
principle and the fact that $f\leq|f|$ we conclude that it suffices to prove the 
theorem for $f\geq0$.
Take $q,p$ satisfying \eqref{3.27.3}
and take the corresponding constant $\tilde b$
from Theorem \ref{theorem 3.27,2}
and the function  $b'$ from the assumptions of Theorem \ref{theorem 3.31.1}.
Then by Theorem \ref{theorem 3.27,2}  
the solution $u$ of \eqref{3.27.1} with $b'
 $ in place of $b$ admits estimate \eqref{3.27.2} and by Theorem \ref{theorem 3.27,3}
the solution $v$ of \eqref{3.27.10} admits
estimate \eqref{3.27.20}. This proves the theorem. \qed

\begin{remark}
                 \label{remark 4.5.7}
Remark \ref{remark 4.5.3} allows us to
make the same arguments as above replacing
$L_{q,p}$ with $\sfL_{p,q}$ and prove
that in the assertion of Theorem \ref{theorem 4.1.1} regarding \eqref{3.27.200} one can replace $L_{q,p}$ with $\sfL_{p,q}$. Sometimes
it might be important. For instance, for $d\geq 3$
 define
$$
f(t,x)=I_{|x|\leq 1,0<t<1} 
|\,|x|-t|^{-2/d}.
$$
Then for any $q,p$ satisfying \eqref{3.31.2}
we have $p>d/2$, $2p/d>1$ and for any $t\in(0,1)$
$$
\int_{|x|<1}f^{p}(t,x)\,dx=N\int_{0}^{1}
r^{d-1}|r-t|^{-2p/d}\,dr=\infty.
$$
 Therefore, $f\not\in L_{q,p,\loc}$
and one cannot tell by using Theorem
\ref{theorem 4.1.1} that equation
\eqref{3.27.1} with $f\wedge n$ will
have solutions bounded by a constant
depending only on the data as in Theorem
\ref{theorem 4.1.1}.

However, for $1<q<d/2$ 
and any $x\in B_{1}$
$$
\int_{0}^{1}|\,|x|-t|^{-2q/d}\,dt\leq
\int_{-1}^{1}|t|^{-2q/d}\,dt<\infty,
$$
so that $f\in \sfL_{p,q}$. Therefore,
equation
\eqref{3.27.1} with $f\wedge n$  does
have solutions bounded by a constant
depending only on the data as in Theorem
\ref{theorem 4.1.1}.

By the way, for $d=2$ equation \eqref{3.31.2}
imposes the same restriction on $q,p$ and such  effect  is, obviously, impossible.
\end{remark}

\mysection{One more example}

In \cite{RZ_23} there are much more results
and we want to discuss one more of them.
The authors prove that if $d\geq3$ and
\begin{equation}
                               \label{4.4.2}
\sup_{t\in[0,T]}\sup_{\lambda>0, B\in\bB_{1}}
|\{x\in B:|b(t,x)|>\lambda\}|^{1/2}
\end{equation}
is small enough, then  the assertion of
Theorem \ref{theorem 4.1.1} holds true.
In the following example we show that
this does not hold if $d=2$.

\begin{example}
                      \label{example 3.28.1}
For $d\geq2$, $r,x\geq0,t>0$, $\theta\in(0,1)$ set
$$
\alpha=\theta(d-1),\quad
c^{-1}=\int_{\bR}e^{-y^{2}}\int_{0}^{\infty}
r^{\alpha-1}e^{-r^{2}}\,dr,
$$
\begin{equation}
                  \label{8,20.10}
p(t,x,r)=ct^{-(\alpha+1)/2}r^{\alpha}
\int_{-\pi/2}^{\pi/2}\big(\cos^{\alpha-1}
\phi\big) \exp\Big(-\frac{x^{2}+r^{2}-2xr
\sin\phi}{t}\Big)\,d\phi
\end{equation}
and for smooth $f(t,r)\geq0$ given on $[0,T]\times[0,\infty)$ 
introduce
$$
u(t,x)=\int_{0}^{T-t}\int_{0}^{\infty}p(s,|x|,r)f(t+s,r)
\,drds.
$$
The computations in Section 5.2 of \cite{Kr_25}
  show that in $\bR^{d}_{T}$ 
\begin{equation}
                    \label{3.29.1}
4\partial_{t}u+\Delta u+b^{i}D_{i}u+g=0,
\end{equation}
where $g(t,x)=f(t,|x|)$ and $b^{i}=-(d-1)(1-\theta )x^{i}/|x|^{2}$. Observe that for such
$b$ the quantity \eqref{4.4.2} is finite
and is as small as we wish if $\theta$ is close
to one (and $\alpha$ is close to $d-1$).

While estimating $u(0,0)$  observe that $(p'=p/(p-1))$
$$
\int_{0}^{\infty}p(s,0,r)f(s,r)
\,dr
$$
$$
\leq\Big(\int_{0}^{\infty}f^{p}(s,r)r^{d-1}\,dr\Big)^{1/p}\Big(\int_{0}^{\infty}p^{p'}(s,0,r) r^{-(d-1)/(p-1)}\,dr\Big)^{1/p'}.
$$
Here the first factor equals a constant times
$\|g(s,\cdot)\|_{L_{p}}$. The $p'$-th power
of the second one equals a constant times
$$
s^{-p'(\alpha+1)/2}\int_{0}^{\infty}r^{ \alpha p'-(d-1)/(p-1)}
e^{-r^{2}p'/s} \,dr,
$$
which is infinite for $p\leq d/(\alpha+1)$
and otherwise equals another constant
times
$$
s^{-p'(\alpha+1)/2+[\alpha p'-(d-1)/(p-1)+1]/2}.
$$
It is seen that, if $d=2$, no matter how
small $1-\theta =1-\alpha >0$ is,
estimate \eqref{3.27.200} fails to hold for
 $p=2/(1+\alpha)>1$ and an appropriate $q$
such that $2/p+2/q<2$.

\end{example}

{\bf Declarations}. 
No funds, grants, or other support was received.
The author has no relevant financial or non-financial interests to disclose.
The manuscript contains no data.

\end{document}